\documentclass[]{amsart}
\usepackage{latexsym}
\usepackage{amsmath}
\usepackage{amsfonts}
\usepackage{amsthm}
\usepackage{amssymb}
\usepackage{amsrefs}
\usepackage[T1]{fontenc}

\title[Red-and-black game]{New inequalities for probability functions in the two-person red-and-black game}
\author{W\l{}odzimierz Fechner}
\address{Institute of Mathematics, Lodz University of Technology, ul. W\'olcza\'nska 215, 90-924 \L\'od\'z, Poland}
\email{wlodzimierz.fechner@p.lodz.pl}
\newtheorem{thm}{Theorem}
\newtheorem{cor}{Corollary}
\newtheorem{prop}{Proposition}
\theoremstyle{remark}
\newtheorem{ex}{Example}
\theoremstyle{definition}
\newtheorem*{ack}{Acknowledgements}

\newcommand{\R}{\mathbb{R}}

\newcommand{\Z}{\mathbb{Z}}
\newcommand{\N}{\mathbb{N}}

\newcommand{\f}{\varphi}

\renewcommand{\(}{\left(} \renewcommand{\)}{\right)}

\keywords{red-and-black game; bold strategy; timid strategy; Nash equilibrium; functional inequality}
\subjclass[2010]{Primary: 91A15. Secondary: 39B62, 91A05, 91A60}

\begin{document}

\begin{abstract}
We discuss a model of a two-person, non-cooperative stochastic game, inspired
by the discrete version of the red-and-black gambling problem presented by Dubins and Savage. 
Assume that two players hold certain amounts of money. At each stage of the game they simultaneously bid some part of their
current fortune and the probability of winning or loosing depends on their bids. In many models of the red-and-black game it is assumed that the win probability is a function of the quotient of the bid of the first player and the sum of both bids. In the literature some additional properties, like concavity or super-multiplicativity are assumed in order to ensure that the bold and timid strategies form a Nash equilibrium, which turns out to be unique in several cases. Our aim is to provide a generalization in which the probability of winning is a two-variable function which depends on both bids. We introduce two new functional inequalities whose solutions lead to win probability functions for which a Nash equilibrium is realized by the bold-timid strategy. Since both inequalities may appear as not quite intuitive, we discuss them in a separate section of the paper and we provide some illustrating examples.
\end{abstract}

\maketitle 

\section{Introduction}

The red-and-black gambling problem goes back to L.E. Dubins and L.J. Savage \cite{DS}. P. Secchi \cite{S} dealt with a two-person red-and-black game in which the first player wants to maximize the probability of winning a certain amount of money, while the second player wants to keep his opponent from this. L. Pontiggia \cites{P1, P2} proposed a modification in which the players' win probabilities can possibly change at each stage of the game. Later, M.-R. Chen and S.-R. Hsiau \cites{Ch1, Ch2}, Y.-C. Yao and M.-R. Chen \cite{Y} provided further modifications and generalizations. The purpose of the paper is to present yet another generalization of the model, in which the win probability is a two-variable function which depends on the two bids.

\medskip

The problem can be described shortly as follows. Two players start the game with some initial fortunes, say $x_0$ and $y_0$ being positive integers. The first player wants to reach a goal, which is an integer $M_1$ greater than $x_0$ and not greater than $x_0+y_0$, and the second player also wants to reach his goal $M_2$, which is an integer between $y_0$ and $x_0+y_0$. Later on we will assume that $M_1=M_2=x_0+y_0$.
The two players bet at each stage of the game an amount  which is a nonnegative integer not greater than their current
fortunes. If a current fortune of the first player is equal to $x$ and he stakes an amount $a$, a current fortune of the second player equals $y$ and he stakes an amount $b$, then their next fortunes will be equal to $x + b$ and $y-b$, respectively with probability $P$, or $x -a $ and $y+a$, respectively with probability $1 - P$. We assume that the probability $P$ depends upon $a$ and $b$.
The game lasts until one of the players reaches his goal. For technical reasons we admit the possibility that $a=0$ or $b=0$. In this case $P$ is equal to $0$ and $1$, respectively, and therefore in case of such bets nothing happens with the player' fortunes (i.e. a bet of zero units gives a sure victory for the opponent, regardless his choice, and of course his gain equals zero units in this case). For the same reasons the value $P(0,0)$ can be undefined.
We also assume that at every stage of the game each player choose his action without any knowledge of the action chosen by the opponent. Thus the game we consider is non-cooperative.

Assume that the bet of player I is equal to $a$ and the bet of player II to $b$. Then the ``fair probability'' $P$ for the first player to win would be ${a}/({a+b})$. In \cites{P1, P2} a class of probabilities under which the game is sub- or superfair is given.
The authors of \cites{Ch1, Ch2} assumed that the probability of winning is a function $f\colon [0,1]\to [0,1]$ which depends upon the value ${a}/({a+b})$. It turned out that convexity and supermultiplicativity of function $f$ 
imply that the bold-timid strategy is a Nash equilibrium for the game. In the present paper we will develop this research further and instead of function $f(a/(a+b) )$ of one variable, we will work with a more general two-place win probability function $P(a,b)$. 

\medskip

The paper is organized as follows. In the next section we formally define the model of the game. First, we check when the game is subfair. Then, we prove that if a certain inequality is satisfied, then an optimal strategy for the first player is to play boldly when the second player plays timidly. Next, we provide conditions under which it is best for the second player to bet timidly when the first player plays boldly. In Section 3 we solve two functional inequalities which are introduced in Section 2. The last section of the paper is devoted to some examples and final comments.

\medskip

Throughout the paper it is assumed that $\R$ denotes the set of real numbers, $\Z$ is the set of integers, $\N $ stands for the set of positive integers and $\N_0 =\N \cup \{0\} $.  
Moreover, for $a, b$ from $\R$ or from $\R\cup \{-\infty, + \infty \}$ respectively, open, closed and half-open intervals with endpoints $a$ and $b$ are denoted by $(a,b)$, $[a,b]$,  $[a,b)$ and $(a,b]$, respectively.

\section{The game}

Assume that we are given an integer $M\geq 2$ which represents the total amount of money in the system. The initial fortune of player I is equal to $x_0 \in \N$ and of player II to $y_0\in \N$. We assume for simplicity that $x_0+y_0=M$ and the goal for both players is to win the amount of $M$. Therefore the state space for both players is the set $S=\{0, 1, \ldots , M\}$ and the game terminates when one of the players reaches $0$ or $M$. The possible actions of a player with fortune $t\in S$ are $\{0, \ldots , t\}$.
A strategy is called \emph{timid}, if at each stage of the game the player bets just one unit of his current wealth.
A strategy is called \emph{bold},  if the player always bets his entire fortune.

Now, let us describe the law of motions in our model. We need a function which assigns the probability of victory as a function of bets of the players. 
 Assume that
$P\colon S \times S \to [0,1]$ is a mapping which satisfies the border conditions:
\begin{equation}\label{P0}
 P(a,0) = 1, \quad  P(0,b)  = 0, \quad a, b \in S\setminus \{0\}.
\end{equation}
The value $P(0,0)$ is left undefined.
As some places we will make a natural assumption that $P$ is nondecreasing with respect to the first variable and nonincreasing with respect to the second one. This means that the chances for winning the turn are better if the player bets (and thus risks) more.
Let $X_n$ denote a random variable which is equal to the fortune of player I at a time $n\in \N$ and suppose he bets an amount $a_n \in \{0, \ldots , X_n\}$. Simultaneously, let the fortune of player II at the same time be denoted by $Y_n$ and assume that he bets $b_n \in \{0, \ldots , Y_n\}$. In our model the laws of motions for the players are defined by formulas
\begin{align}
\label{ini}
X_1&=x_0, \quad Y_1 = y_0,\\ 
\label{lomx}
X_{n+1}&=\left\{  \begin{array}{ll} X_n+b_n, & \textrm{with probability }  P(a_n,b_n), \\
	X_n-a_n, & \textrm{with probability } 1-P(a_n,b_n),
\end{array}            
\right.\\ Y_{n+1}&=M-X_{n+1},\label{lomy}
\end{align}
for $n \in\N$.

We assume that the sets of possible actions of both players contain zero. However, as a rule of the game, they are not allowed to bet zero units whenever their fortunes are positive. Then condition \eqref{P0} means that a hypothetical bet of zero units by any player has no effect on the game (in a sense that the fortunes of both players don't change during this stage).
In particular, from this it follows that $X_{n+1}=X_n$ and $Y_{n+1}=Y_n$ whenever $X_n \in \{0, M\}$. Therefore, we can think of $(X_n)$ and $(Y_n)$ as of infinite sequences.

A special case of this model, when 
\begin{equation}\label{fP}
P(a,b) = f\(\frac{a}{a+b}\), \quad a, b \in S
\end{equation}
with some function $f\colon [0,1]\to [0,1]$ was studied by M.-R. Chen and S.-R. Hsiau in \cite{Ch1}. This in turn is a generalization of L. Pontiggia' models \cites{P1, P2}. 
Our subsequent results can be therefore viewed as an extension of this research. Our approach is motivated by these fine articles. 

The game is called \emph{subfair} if the process ${X_n}$ is a supermartingale under all choices of strategies for the players. Similarly, the game is \emph{superfair} if the process ${X_n}$ is a submartingale under all choices of strategies for both players. We begin with an easy condition upon $P$ which implies that the game is sub- or superfair. This result is a slight modification of the first part of \cite{P1}*{Theorem 3.1}, see also \cite{Ch1}*{p. 907}.

\medskip

\begin{prop}
Assume that $P\colon S \times S \to [0,1]$ is an arbitrary mapping and we consider a discrete two-person red-and-black game with the law of motions described by formulas \eqref{ini}, \eqref{lomx} and \eqref{lomy}. Then the game is subfair  if and only if
$$P(a,b) \leq \frac{a}{a+b}, \quad a, b \in S, \, a+b\leq M.$$
The reverse inequality holds true if and only if the game is superfair.
\end{prop}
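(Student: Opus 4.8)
The plan is to reduce the (super)martingale property, which concerns the whole process, to a pointwise inequality on the one-step drift, and then simply read off the condition on $P$. The only genuine computation is a single conditional expectation; the rest is bookkeeping about which bets can actually occur.

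First I would fix an arbitrary stage $n$ and arbitrary admissible bets $a_n=a\in\{0,\dots,X_n\}$ and $b_n=b\in\{0,\dots,Y_n\}$, and compute the conditional expectation of the next fortune. Conditioning on the history $\mathcal{F}_n$ (so that $X_n=x$ and the bets $a,b$ are determined), the two transitions in \eqref{lomx} give
\begin{equation*}
\mathbb{E}[X_{n+1}\mid\mathcal{F}_n]=P(a,b)(x+b)+\bigl(1-P(a,b)\bigr)(x-a)=x-a+(a+b)P(a,b),
\end{equation*}
so the one-step drift equals $(a+b)P(a,b)-a$. Since $X_n\in S$ is bounded it is integrable, and because the admissibility constraints force $a\le X_n$ and $b\le Y_n=M-X_n$, every pair of bets that can occur satisfies $a+b\le M$ while the fortunes $x+b$ and $x-a$ stay in $S$; hence no integrability or feasibility issue arises.

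For sufficiency I would argue that if $P(a,b)\le a/(a+b)$ for all $a,b\in S$ with $a+b\le M$, then for every strategy and every stage the drift satisfies $(a+b)P(a,b)-a\le(a+b)\cdot\frac{a}{a+b}-a=0$, whence $\mathbb{E}[X_{n+1}\mid\mathcal{F}_n]\le X_n$ and $(X_n)$ is a supermartingale, i.e. the game is subfair. At the absorbing states $X_n\in\{0,M\}$ the process is frozen, so the inequality is automatic, and the boundary instances $a=0$ or $b=0$ are consistent with \eqref{P0}, as both sides then coincide.

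For necessity I would contrapose: assuming $P(a_0,b_0)>a_0/(a_0+b_0)$ for some admissible pair with $a_0+b_0\le M$, I would exhibit a strategy producing a strictly positive drift and hence violating the supermartingale property. I expect the \emph{realisability} of the bets to be the one point needing care. For $a_0,b_0\ge 1$ with $a_0+b_0\le M$, the interval $a_0\le x\le M-b_0$ is nonempty (as $a_0+b_0\le M$) and contained in $\{1,\dots,M-1\}$, so there is a genuinely interior, non-absorbing state $x$ at which both $a_0$ and $b_0$ are admissible; letting the players place exactly these bets there yields drift $(a_0+b_0)P(a_0,b_0)-a_0>0$, so $(X_n)$ is not a supermartingale. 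One must make sure this state is actually reached under some admissible strategy and that it is interior, so that the positive drift is not masked by freezing at $\{0,M\}$; the boundary cases $a_0=0$ or $b_0=0$ require no argument because of \eqref{P0}. Finally, the superfair statement follows by reversing every inequality: the same drift identity shows $(X_n)$ is a submartingale under all strategies if and only if $(a+b)P(a,b)-a\ge 0$ for all admissible bets, that is $P(a,b)\ge a/(a+b)$.
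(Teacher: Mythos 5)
Your proposal is correct and takes essentially the same route as the paper: the entire content is the single one-step computation $\mathbb{E}[X_{n+1}\mid \mathcal{F}_n]=X_n+(a+b)\bigl[P(a,b)-\tfrac{a}{a+b}\bigr]$, from which the sub-/supermartingale property is read off from the sign of the bracket, exactly as in the paper's proof. Your extra attention to the necessity direction (exhibiting an interior state $a_0\le x\le M-b_0$ where the offending bets are admissible) goes beyond the paper, which stops at the drift identity; just note that you flag but do not complete the reachability step, and for a fixed initial fortune $x_0$ it can genuinely fail (e.g.\ if $P$ vanishes at every pair playable from states reachable from $x_0$), so the literal ``only if'' is implicitly understood, as in the paper, at the level of one-step drifts over all admissible bet configurations rather than for one fixed $x_0$.
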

\begin{proof}
Fix $n \in \N$. We have 
\begin{align*}
E[X_{n+1}|X_n] &=(X_n+b_n) P(a_n,b_n) + (X_n-a_n)[1-P(a_n,b_n)]\\ &= X_n + (a_n+b_n)\left[P(a_n,b_n) - \frac{a_n}{a_n+b_n}\right].
\end{align*}
Thus, if bids of both players at the $n$-th stage of the game are equal to $a_n$ and $b_n$, respectively, then the sequence $({X_n})$ of random variables is a sub- or supermartingale iff the value in the last square bracket is positive, or negative, respectively.
\end{proof}

Next, we are going to prove a counterpart to \cite{Ch1}*{Theorem 2.1} by M.-R. Chen and S.-R. Hsiau. They showed that the convexity of function $f$ given by \eqref{fP} is a sufficient condition for the fact that the bold strategy is best for player I  if player II plays timidly and the game is subfair. We propose another functional inequality to obtain an analogous effect. However, we do not generalize \cite{Ch1}*{Theorem 2.1}, but instead we provide a new class of win probability functions.

Before we proceed with the theorem, we will recall some crucial notions. We refer the reader to the monograph of A.P. Maitra and W.D. Sudderth \cite{MS} for a comprehensive treatment of the topic. We say that a function $Q\colon S \to \R \cup \{-\infty, + \infty \}$ is \emph{excessive} for some game $\gamma$ at a point $x \in S$ if either $E[Q]$ is undefined or it is defined and $E[Q]\leq Q(x)$. 
A strategy $\sigma$ is a sequence $\sigma_0,  \sigma_1,\dots  $ such that $\sigma_0$ is a gamble and for $n=1, 2, \dots $ by $\sigma_n$ we mean a function which maps a partial history $(x_1, \dots , x_n)$ of the game to a gamble $\sigma_n(x_1, \dots , x_n)$.
Every strategy $\sigma$ determines the corresponding probability measure $P_\sigma$ on the space of all possible results of the game.
If a strategy $\sigma$ of a player is given, then $Q$ is \emph{excessive for $\sigma$} at $x \in S$ if it is excessive at $\sigma_0$ and for every partial history $(x_1, \dots , x_n)$ of positive probability of occurrence $Q$ is excessive for the gamble $\sigma_n(x_1, \dots , x_n)$ at the point $x_n$. The pair $\pi=(\sigma, t)$, where $\sigma$ is a strategy and $t$ is a stopping rule (i.e. stopping time which is everywhere finite), is called a \emph{policy}. By the player's \emph{utility function} we mean every mapping $u\colon S \to \R$. Customarily, some additional assumptions are imposed on $u$. However, for our purposes, later on we will restrict ourselves to the case when $u$ is equal to the identity mapping. Given player's utility $u$, \emph{utility of a policy} is defined as $u(\pi) = E_{\sigma}(X_t)$, i.e. it is the expected utility under probability $P_\sigma$ at a time of stopping. Finally, the policy is \emph{optimal} for the player with the utility function $u$ if its utility is equal to $\sup\limits_\pi u(\pi)$, where the supremum is taken over all available policies.

\medskip

The following theorem provides a criterion for optimality of a strategy in terms of excessivity of some mapping.

\begin{thm}[A.P. Maitra and W.D. Sudderth \cite{MS}, Theorem 3.3.10]\label{Ex}
Given player's utility function $u\colon S \to \R$, a strategy $\sigma$ is optimal if and only if the function $Q\colon S \to \R\cup \{-\infty, + \infty \}$ given by
$$ Q(x) = u(\sigma (x)), \quad x\ \in S $$
is excessive.
\end{thm}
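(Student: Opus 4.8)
The statement is a verification (optimality) criterion of the kind standard in the Dubins--Savage theory of gambling, so the plan is to reduce it to the characterization of the optimal return function as the least excessive majorant of $u$, together with the supermartingale reading of excessivity. The first step I would take is to fix the translation between the two languages: to say that $Q$ is excessive along the gambles generated by $\sigma$ is to say that $(Q(X_n))_n$ is a supermartingale under $P_\sigma$ with respect to the filtration generated by the partial histories, since the one-step inequality $E[Q(X_{n+1})\mid x_1,\dots,x_n]\le Q(x_n)$ is exactly the excessivity of $Q$ for the gamble $\sigma_n(x_1,\dots,x_n)$ at $x_n$. Alongside this dictionary I would record the two elementary bounds on the return $Q=u\circ\sigma$: first $Q\ge u$ pointwise, because the utility of a strategy dominates the reward of stopping at once, and second $Q\le V$, where $V(x)=\sup_\pi u(\pi)$ is the optimal return, simply because $Q$ is the return of a single strategy while $V$ is the supremum over all policies.

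For sufficiency I would argue that if $Q$ is excessive then it is an excessive function majorizing $u$, so by the foundational fact that $V$ is the \emph{least} excessive majorant of $u$ one obtains $V\le Q$; combined with $Q\le V$ this forces $Q=V$, i.e.\ $\sigma$ attains the optimal return and is therefore optimal. For necessity I would run the implication backwards: optimality of $\sigma$ says precisely that $Q=V$, and since $V$ is itself excessive, restricting this property to the gambles that $\sigma$ actually selects along positive-probability histories yields that $Q$ is excessive for $\sigma$. In both directions, once the least-excessive-majorant description of $V$ is in hand, the algebra of excessive functions does all the bookkeeping.

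The genuine difficulty is concentrated in that supporting fact that $V$ is the least excessive majorant of $u$, and equivalently in the optional-stopping step underlying it: one must control the limiting behaviour of the supermartingale $(Q(X_n))$ at the (possibly unbounded) stopping rule $t$, so that $E_\sigma[Q(X_t)]$ converges to the intended quantity and no competing policy can overshoot $Q$. This is the equalizing part of the argument, which in general needs a uniform-integrability or boundedness hypothesis before the supermartingale convergence theorem can be invoked. Here the structure of the game is forgiving, since the state space $S=\{0,\dots,M\}$ is finite with absorbing endpoints $0$ and $M$, so $u$ and hence $(Q(X_n))$ are bounded and the passage to the limit is automatic. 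I would therefore isolate this convergence statement as the single technical lemma and treat everything else as formal manipulation of excessive functions; for the purposes of the present paper, however, the cleanest course is to quote the full argument from \cite{MS}.
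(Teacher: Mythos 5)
You cannot really be compared against the paper's proof here, because the paper gives none: Theorem \ref{Ex} is imported verbatim from Maitra and Sudderth \cite{MS}*{Theorem 3.3.10} and used as a black box. Judged on its own merits, your sketch is the standard verification argument and is sound as a reduction: the chain $u\le Q\le V$ (where $V$ is the optimal return), the supermartingale reading of excessivity along $\sigma$, and the two pillars that $V$ is the least excessive majorant of $u$ and that $V$ is itself excessive together yield both directions, and your remark that on the finite state space $S=\{0,\dots,M\}$ with absorbing barriers boundedness makes the equalizing/optional-stopping step automatic is exactly where the technical weight sits in general. Be aware, though, that both pillars live in the very same chapter of \cite{MS}, so what you have produced is a repackaging of the source's argument rather than an independent proof --- which is legitimate, and indeed your closing suggestion to quote \cite{MS} is precisely what the paper does, only more coarsely.

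Two precision points are worth fixing. First, ``excessive'' in the statement must be read as excessivity with respect to \emph{every} gamble available at \emph{every} state, not merely along $\sigma$; this is how the paper consumes the theorem (inequality \eqref{exc} is verified for all bets $a\in\{0,\dots,x\}$, not only the bold one), and your sufficiency direction correctly needs this full excessivity for the least-majorant comparison $V\le Q$. Second, in the necessity direction your phrase ``restricting this property to the gambles that $\sigma$ actually selects'' would deliver only excessivity \emph{for $\sigma$}, which is strictly weaker than what your sufficiency direction consumed, so the equivalence would not close as written; the correct statement is simpler --- optimality gives $Q=V$, and $V$ is excessive in the full sense, hence so is $Q$, with no restriction at all. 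Finally, note that the inequality $Q\ge u$ silently uses leavability (the stopping rule is part of the policy, so stopping at once is admissible); this does hold in the paper's setup, but it deserves to be said, since the least-excessive-majorant characterization fails without it.
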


We will work with a one-variable map $\f\colon S \to [0,1]$ defined as 
\begin{equation}\label{fi}
\f(x)= P(x,1), \quad x \in S.
\end{equation}

\begin{thm}\label{t1}
Assume that $P\colon S \times S \to [0,1]$ is a mapping which  satisfies \eqref{P0} and the map  $\f\colon S \to [0,1]$ given by \eqref{fi} is a nondecreasing solution of the inequality
\begin{equation}\label{I1}
\f(y)-\f(x)\leq \f(x-y)[\f(y)-1], 
\end{equation}
for $x, y\in S$ such that $y\leq x$.
We consider a discrete two-person red-and-black game with the law of motions described by formulas \eqref{ini}, \eqref{lomx} and \eqref{lomy}.  
Assume that player II plays a timid strategy. Then the bold strategy is best for player I.
\end{thm}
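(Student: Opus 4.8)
The plan is to invoke the Maitra--Sudderth criterion (Theorem~\ref{Ex}) with the identity utility $u$: the bold strategy of player~I is optimal exactly when the function $Q$ recording its utility is excessive. First I would fix player~II to the timid strategy $b_n\equiv 1$ and read off the law of motion from \eqref{lomx}. If player~I holds $x$ and bets $a$, then his fortune moves to $x+1$ with probability $P(a,1)=\f(a)$ and to $x-a$ with probability $1-\f(a)$. Playing boldly means $a=x$, so from a state $0<x<M$ the bold player reaches $x+1$ with probability $\f(x)$ and is ruined (reaches $0$) otherwise. Writing $Q(x)$ for the expected terminal fortune of the bold player started at $x$, and using the boundary values $Q(0)=0$, $Q(M)=M$, conditioning on the first round yields the recursion $Q(x)=\f(x)Q(x+1)$ for $0<x<M$, hence the closed form $Q(x)=M\prod_{k=x}^{M-1}\f(k)$. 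In particular $Q\ge 0$, and $Q(j)=\f(j)Q(j+1)$ for every $0\le j\le M-1$.

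It then remains to verify that $Q$ is excessive for player~I, i.e.\ that for every state $x$ with $0<x<M$ and every admissible bet $a\in\{0,\dots,x\}$ one has
\begin{equation*}
\f(a)\,Q(x+1)+\bigl(1-\f(a)\bigr)Q(x-a)\le Q(x)=\f(x)\,Q(x+1).
\end{equation*}
Iterating the recursion $Q(j)=\f(j)Q(j+1)$ from $j=x-a$ up to $j=x$ gives $Q(x-a)=\rho\,Q(x+1)$ with $\rho:=\prod_{k=x-a}^{x}\f(k)$. Substituting this and using $Q(x+1)\ge 0$, the displayed inequality follows from the single scalar estimate
\begin{equation*}
\bigl(1-\f(a)\bigr)\rho\le \f(x)-\f(a).
\end{equation*}
This is the heart of the matter, and it is where inequality \eqref{I1} enters.

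For the crux I would apply \eqref{I1} with $y=a$ (legitimate since $a\le x$), which after rearranging reads $\f(x)-\f(a)\ge \f(x-a)\bigl(1-\f(a)\bigr)$. Separately, since $\f$ takes values in $[0,1]$, writing $\rho=\f(x-a)\prod_{k=x-a+1}^{x}\f(k)$ shows $\rho$ is at most its first factor, so $\rho\le\f(x-a)$; multiplying by $1-\f(a)\ge 0$ gives $\bigl(1-\f(a)\bigr)\rho\le\bigl(1-\f(a)\bigr)\f(x-a)\le\f(x)-\f(a)$, which is exactly the required estimate. The boundary bets are automatic: $a=0$ forces $\f(0)=0$ and equality, while $a=x$ forces $\rho=0$ (as $\f(0)=0$) and equality again, matching the bold move itself. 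The main obstacle I anticipate is not any single computation but setting up the reduction cleanly---identifying the correct one-variable value function $Q$, justifying the telescoping that produces $\rho\,Q(x+1)$, and pinning down precisely the form in which \eqref{I1} must be used so that the surplus factors of $\f$ hidden in $\rho$ are absorbed by the crude bound $\rho\le\f(x-a)$ rather than worked against. Once the scalar estimate holds for all $a$, excessivity of $Q$ follows and Theorem~\ref{Ex} delivers optimality of the bold strategy.
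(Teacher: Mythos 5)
Your proof is correct, and while it follows the paper's overall architecture---the Maitra--Sudderth excessivity criterion (Theorem \ref{Ex}), the recursion $Q(x)=\f(x)Q(x+1)$, the telescoped identity $Q(x-a)=\rho\, Q(x+1)$ with $\rho=\prod_{k=x-a}^{x}\f(k)$, and the reduction of the excessivity condition \eqref{exc} to the scalar estimate $[1-\f(a)]\rho\le \f(x)-\f(a)$, which is exactly the paper's inequality \eqref{conv}---you prove that key inequality by a genuinely different and simpler argument. The paper establishes \eqref{conv} by induction on $a$: the inductive step splits into cases according to whether $\f(a)=1$, divides by $1-\f(a)$, and invokes both the monotonicity of $\f$ and one application of \eqref{I1}. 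You instead observe that all factors of $\rho$ other than $\f(x-a)$ lie in $[0,1]$, so $\rho\le\f(x-a)$, and then a single application of \eqref{I1} with $y=a$ (by the symmetry of \eqref{I1} under $y\leftrightarrow x-y$, this is the same instance the paper uses in its inductive step with $y=x-(a+1)$) yields $[1-\f(a)]\f(x-a)\le\f(x)-\f(a)$ at once. This one-line bound eliminates the induction and the case analysis entirely; it also reveals that the nondecreasing hypothesis is not separately needed for this step, since \eqref{I1} together with $0\le\f\le 1$ already forces $\f$ to be nondecreasing (the right-hand side of \eqref{I1} is nonpositive). The remaining deviations are cosmetic: you normalize $Q(M)=M$ (expected terminal fortune, literally the identity utility of Theorem \ref{Ex}) where the paper uses the victory probability with $Q(M)=1$; excessivity is unaffected by this positive scaling, and your boundary checks at $a=0$ and $a=x$ (where $\rho=0$ because $\f(0)=0$ by \eqref{P0}) agree with the paper's.
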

\begin{proof}
First, we will verify inductively that
\begin{equation}\label{conv}
[1-\f(a)]\prod_{i=0}^a  \f(x-i)\leq {\f(x)-\f(a)}
\end{equation}
for every $x \in \{0, \ldots , M\}$ and $a \in \{0, \ldots , x\}$.
If $a=0$, then clearly $$[1-\f(0)]\f(x) = \f(x)-\f(0),$$
since $\f(0)=0$.
Now, assume that \eqref{conv} holds true for some $a \in S $ and all $x \in \{a, \ldots , M\}$. We will prove it for $a+1$. If $\f(a)=1$, then $\f(a+1)=1$ and there is nothing to prove since the left-hand side equals to zero and $\f$ is nondecreasing. Thus, we assume that $\f(a)<1$. Take some $x\in \{ a+1, \ldots , M\}$; we have
\begin{align*}
[1-\f(a+1)]\prod_{i=0}^{a+1}  \f(x-i) &= \frac{[1-\f(a+1)]}{[1-\f(a)]}[1-\f(a)]\f(x-(a+1)) \prod_{i=0}^{a}  \f(x-i)  \\&\leq \frac{[1-\f(a+1)]}{[1-\f(a)]}  \f(x-(a+1))[\f(x)-\f(a)].
\end{align*}
Therefore, it is enough to show that
$$
[1-\f(a+1)]\f(x-(a+1)) [\f(x)-\f(a)]\leq [1-\f(a)][\f(x)-\f(a+1)].
$$
Since $\f\leq 1$ and $\f$ is nondecreasing, then we are done if we prove
$$[1-\f(a+1)]\f(x-(a+1)) \leq \f(x)-\f(a+1).$$
This in turn follows directly from inequality \eqref{I1} applied for $y= x-(a+1)$.

Since player II plays timidly, then the law of motions of player I who plays boldly takes the form: $X_1=x_0\in S$,
$$
X_{n+1}=\left\{  \begin{array}{ll} X_n+1, & \textrm{with probability } \f(X_n), \\
	0, & \textrm{with probability }  1-\f(X_n),
	\end{array}            
\right.
$$
whenever $0<X_n<M$ and $X_{n+1}=X_n$ if $X_n\in \{0, M\}$.

Denote by $Q(x)$ the probability of victory for player I who has an initial fortune $x$, i.e.
$$Q(x) = \mathbb{P}\{ X_m=M \textrm{ for some } m\in \N \}.$$
It is clear that $Q(0)=0$, $Q(M)=1$ and 
$$Q(x) = \f(x)Q(x+1), \quad x \in \{0, \ldots , M-1\}.$$
Easily one obtains an inductive extension of the above formula:
\begin{equation}\label{eq:}
Q(x-a) = \prod_{i=0}^a \f(x-i)Q(x+1), \quad x \in \{0, \ldots , M-1\}, a \in \{0, \ldots , x\}.
\end{equation}
In view of Theorem \ref{Ex} applied with player's utility function equal to the identity, in order to finish the proof it is enough to prove that function $Q$ is excessive. 
This means that we need to verify inequality
\begin{equation}\label{exc}
\f(a)Q(x+1) + [1-\f(a)]Q(x-a)\leq Q(x)
\end{equation}
for all $x \in \{1, \ldots , M-1\}$ and $a \in \{0, \ldots , x\}$. But this is ensured by \eqref{conv} and \eqref{eq:}.
\end{proof}

Our next statement is a counterpart to \cite{Ch1}*{Theorem 2.2} by M.-R. Chen and S.-R. Hsiau and is in a sense dual to Theorem \ref{t1} above. A new inequality for the two-place win probability function is introduced in order to derive the excessivity condition. 

\begin{thm}\label{t2}
Assume that $P\colon S \times S \to [0,1]$ is a mapping which satisfies \eqref{P0} and
\begin{equation}\label{mult}
P(x,a) P(x+a,b) \leq P(x,a+b)
\end{equation}
 for every $x \in \{0, \ldots , M\}$ and $a \in \{0, \ldots , M-x\}$ and $b \in \{0, \ldots , M-a\}$.
We consider a discrete two-person red-and-black game with the law of motions described by formulas \eqref{ini}, \eqref{lomx} and \eqref{lomy}. Assume that player I plays the bold strategy. Then the timid strategy is best for player II.
\end{thm}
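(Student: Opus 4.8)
The plan is to mirror the proof of Theorem~\ref{t1}, interchanging the roles of the two players and again appealing to the excessivity criterion of Theorem~\ref{Ex}. Because player~I is bold he stakes his whole fortune $X_n$, so when $0<X_n<M$ and player~II bets $b$ the fortune of player~I either jumps to $X_n+b$ with probability $P(X_n,b)$ or collapses to $0$ with probability $1-P(X_n,b)$; in the latter event $Y_{n+1}=M$ and player~II wins the whole game. First I would freeze player~II at the timid bet $b=1$ and introduce the value function
$$ R(x)=\mathbb{P}\{\, Y_m=M \text{ for some } m\in\N \mid X_1=x\,\}, $$
the probability that player~II wins when player~I starts with fortune $x$ and the two players act as above. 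Then $R(0)=1$, $R(M)=0$, and conditioning on the first turn yields the recurrence $R(x)=\f(x)R(x+1)+[1-\f(x)]$ for $x\in\{1,\dots,M-1\}$, with $\f(x)=P(x,1)$ as in \eqref{fi}. Equivalently $1-R(x)=\f(x)[1-R(x+1)]$, which iterates to
$$ 1-R(x)=\Big(\prod_{i=0}^{b-1}\f(x+i)\Big)[1-R(x+b)], \qquad x\in\{0,\dots,M\},\ b\in\{0,\dots,M-x\}. $$

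To conclude via Theorem~\ref{Ex}, applied with player~II's utility taken to be the identity on his own fortune, it suffices to show that $R$ is excessive for the one-person decision problem obtained by fixing player~I at the bold strategy. A bet $b\in\{0,\dots,M-x\}$ of player~II moves the state from $x$ to $x+b$ with probability $P(x,b)$ and to $0$ with probability $1-P(x,b)$, so excessivity of $R$ reads
$$ P(x,b)R(x+b)+[1-P(x,b)]\leq R(x), \qquad x\in\{1,\dots,M-1\},\ b\in\{0,\dots,M-x\}. $$
Rearranging, this is equivalent to $P(x,b)[1-R(x+b)]\geq 1-R(x)$; substituting the iterated identity for $1-R(x)$ and cancelling the nonnegative factor $1-R(x+b)$ reduces everything to the purely multiplicative inequality
$$ \prod_{i=0}^{b-1}P(x+i,1)\leq P(x,b). $$
The boundary cases $b=0$ and $R(x+b)=1$ are immediate from the border conditions \eqref{P0}.

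The displayed multiplicative inequality is where hypothesis \eqref{mult} enters, and establishing it is the heart of the argument. I would prove it by induction on $b$: the case $b=1$ is trivial, and for the inductive step I write $\prod_{i=0}^{b}P(x+i,1)=P(x,1)\prod_{i=0}^{b-1}P((x+1)+i,1)\leq P(x,1)P(x+1,b)$ using the inductive hypothesis at $x+1$, and then apply \eqref{mult} with increment $1$ in the form $P(x,1)P(x+1,b)\leq P(x,b+1)$; the indices remain admissible throughout because $x+b\leq M$. I expect the only genuinely delicate point to be the bookkeeping that converts ``timid is best for player~II'' into the excessivity inequality above, and in particular getting the direction of the inequality right once the winning event for player~II is identified with the collapse $X_n\to 0$. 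Once that is in place the reduction to \eqref{mult} is essentially forced, and no monotonicity of $\f$ is needed, in contrast with Theorem~\ref{t1}.
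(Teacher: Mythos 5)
Your proposal is correct and takes essentially the same route as the paper: your $R$ is exactly the paper's $T=1-Q$, your iterated identity $1-R(x)=\prod_{i=0}^{b-1}\f(x+i)\,[1-R(x+b)]$ is the paper's \eqref{eq:} in complementary form, your excessivity inequality is \eqref{excT} (equivalently \eqref{star}), and the key bound $\prod_{i=0}^{b-1}P(x+i,1)\leq P(x,b)$ is obtained by the same induction from \eqref{mult}. The only cosmetic difference is that the paper substitutes the product identity into \eqref{star} directly instead of cancelling the factor $1-R(x+b)$, which automatically covers the degenerate case $Q(x+b)=0$ that you dispose of separately.
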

\begin{proof}
Assume that at $n$-th stage of the game player II bets an amount of $b$. If  player I plays boldly, then his law of motions is the form
$$
X_{n+1}=\left\{  \begin{array}{ll} X_n+ b, & \textrm{with probability }  P(X_n,b), \\
	0, & \textrm{with probability }  1-P(X_n,b),
	\end{array}            
\right.
$$
whenever $0<X_n <M$.

Denote by $T(x)$ the probability of victory for player II who has an initial fortune $M-x$. Clearly $T(x)=1-Q(x)$, where $Q$ is defined in the proof of Theorem \ref{t1}. In view of Theorem \ref{Ex} the proof will be completed when we prove that $T$ is excessive. In other words, we need to justify inequality
\begin{equation}\label{excT}
P(x,b)T(x+b) + [1-P(x,b)]T(0)\leq T(x),
\end{equation}
for all $x \in \{0, \ldots , M-1\}$ and $b \in \{0, \ldots , M-x\}$. Directly from the definition we get $T(0)=1$. Therefore, an equivalent formulation of \eqref{excT} with the use of function $Q$ is simply
\begin{equation}
Q(x) \leq P(x,b) Q(x+b).
\label{star}
\end{equation}
From inequality \eqref{mult} we get
$$ P(x,1)P(x+1,1) \leq P(x,2).$$
Using this and \eqref{mult} once again one can prove inductively that
$$ \prod_{i=0}^{b-1}P(x+i,1) \leq P(x,b).$$
Now, using the formula \eqref{eq:} for $a=b-1$ and with $x$ replaced by $x-(b-1)$, jointly with the last inequality we derive
\begin{align*}
Q(x)&= \prod_{i=0}^{b-1}P(x+i,1) Q(x+b) \leq P(x,b) Q(x+b)
\end{align*}
and the proof is completed.
\end{proof}

Joining both theorems we obtain a corollary which shows the existence of a bold-timid Nash equilibrium.

\begin{cor}\label{c}
Assume that $P\colon S \times S \to [0,1]$ is a mapping which  satisfies \eqref{P0} and \eqref{mult} for every $x \in \{0, \ldots , M\}$ and $a \in \{0, \ldots , M-x\}$ and $b \in \{0, \ldots , M-a\}$.
Further, assume that the map  $\f\colon S \to [0,1]$ given by \eqref{fi} is a nondecreasing solution of  \eqref{I1}
for $x, y\in S$ such that $y\leq x$. 
 Then the bold-timid profile is a Nash equilibrium of the discrete two-person red-and-black game with the law of motions described by formulas \eqref{ini}, \eqref{lomx} and \eqref{lomy}.
\end{cor}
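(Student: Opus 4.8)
The plan is to read the Nash equilibrium property directly off the two preceding theorems, since the hypotheses imposed in Corollary \ref{c} are precisely the union of the hypotheses of Theorem \ref{t1} and Theorem \ref{t2}. Recall that a strategy profile is a Nash equilibrium exactly when each player's component is a best response to the opponent's fixed strategy. The profile under consideration is the bold strategy for player I together with the timid strategy for player II, so I must check the two corresponding best-response conditions.

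First I would fix player II at the timid strategy and invoke Theorem \ref{t1}. Its assumptions are the border conditions \eqref{P0} together with $\f$ being a nondecreasing solution of \eqref{I1}, both of which are among the hypotheses of the corollary. The theorem then yields that the bold strategy maximizes player I's probability of winning against a timid opponent, so player I cannot profitably deviate from bold. Next I would fix player I at the bold strategy and invoke Theorem \ref{t2}, whose assumptions are \eqref{P0} together with the multiplicativity-type inequality \eqref{mult}, again supplied by the corollary. This gives that the timid strategy maximizes player II's probability of winning against a bold opponent, so player II cannot profitably deviate from timid.

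The only point requiring care, and the closest thing to an obstacle, is to confirm that these two one-sided optimality statements really assemble into the standard definition of a Nash equilibrium. Because the total stake $M$ is conserved and player II reaches his goal exactly when player I's fortune hits $0$, player II's win probability equals one minus player I's win probability; the game is therefore zero-sum, and maximizing each player's own win probability against the opponent's fixed play is exactly the best-response requirement. Matching the conclusions of Theorems \ref{t1} and \ref{t2} to these two best-response inequalities completes the argument, with no further computation needed.
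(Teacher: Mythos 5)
Your proposal is correct and matches the paper's argument: the corollary is stated there as an immediate consequence of joining Theorem \ref{t1} (bold is a best response to timid) with Theorem \ref{t2} (timid is a best response to bold), which is exactly your decomposition. Your extra remark that the two one-sided optimality statements assemble into the Nash property via $T(x)=1-Q(x)$ is a sound articulation of what the paper leaves implicit.
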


A Nash equilibrium postulated by the above statement needs not to be unique since there exist a probability functions which are not strictly monotone on both variables (see Examples \ref{e2} and \ref{el} below). However, if one assumes additionally some natural monotonicity conditions, then the bold-timid profile is the unique Nash equilibrium, like in an analogous result of Pontiggia \cite{P1}*{Corollary 4.1} ).

\begin{cor}\label{c1}
Under assumptions of Corollary \ref{c}, if additionally  the map  $\f\colon S \to [0,1]$ given by \eqref{fi} is strictly increasing and $P(1,y)>0$ for every $y \in S\setminus \{0\}$,
then the bold-timid profile is the unique Nash equilibrium for the game.
\end{cor}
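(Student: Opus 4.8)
The plan is to exploit the fact that the game is constant-sum: since $X_n+Y_n=M$ at every stage and play stops as soon as one fortune hits $0$ or $M$, player I reaches $M$ precisely when player II reaches $0$, so the two winning probabilities sum to $1$. Hence the bold-timid value $v=Q(x_0)$ (with $Q$ as in the proof of Theorem \ref{t1}) is the value of the game, every Nash equilibrium awards player I exactly $Q$, and the set of equilibria is the product of the two players' sets of optimal (security) strategies. It therefore suffices to show that bold is the unique optimal strategy for player I and timid the unique optimal strategy for player II, where ``unique'' is read on the set of states reached with positive probability, equivalently that any equilibrium induces bold and timid play at every such state.

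First I would sharpen Theorem \ref{t1}. Strict monotonicity of $\f$ together with $\f(0)=0$ yields $\f(j)>0$ for $j\geq 1$, so $Q(x)=\prod_{j=x}^{M-1}\f(j)>0$ for $1\leq x\leq M$; moreover $\f(j)<\f(M)\leq 1$ for $1\leq j\leq M-1$, so $\f$ is strictly below $1$ at every interior state. Feeding these strict bounds into the chain that proves \eqref{conv} — exactly the steps previously justified by ``$\f\leq 1$ and $\f$ nondecreasing'' — upgrades \eqref{conv}, and hence the excessivity inequality \eqref{exc}, to a strict inequality for every non-bold action $a\in\{1,\dots,x-1\}$. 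By Theorem \ref{Ex} this means that against a timid player II any deviation from bold at a positively-visited state strictly lowers player I's winning probability; so bold is the unique best response to timid, and by the constant-sum reduction the unique optimal strategy for player I on the equilibrium path.

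For player II the required statement is that timid is uniquely optimal, and here lies the delicate part. Rewriting \eqref{excT} through $T=1-Q$ as in the proof of Theorem \ref{t2}, a bet $b\geq 2$ against a bold player I is strictly worse than a timid bet precisely when \eqref{star} is strict, i.e. when $\prod_{i=0}^{b-1}\f(x+i)<P(x,b)$. I would derive this from the joint use of \eqref{mult}, \eqref{I1} and the global bound $P\leq 1$ on $S\times S$: strictness cannot come from \eqref{mult} alone, since for win probabilities of the multiplicative form $P(x,b)=\prod_{i=0}^{b-1}\f(x+i)$ inequality \eqref{star} is an equality, so one must show that this degenerate case cannot coexist with a strictly increasing $\f$ solving \eqref{I1} while keeping $P$ bounded by $1$. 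Once \eqref{star} is strict, a non-timid bet strictly lowers player II's winning probability against bold, making timid his unique best response; and the positivity hypotheses $\f>0$ on $\{1,\dots,M-1\}$ and $P(1,y)>0$ ensure that every relevant decision node is reached with positive probability, so a strict local loss propagates to a strict global loss.

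The hard part will be this third paragraph on two counts. The analytic obstacle is establishing strict \eqref{star}: it is not the strictification of a single inequality but forces an interaction between the two hypotheses governing different slices of $P$ (namely \eqref{mult}, which constrains $P(\cdot,b)$ for all $b$, versus \eqref{I1}, which constrains only $\f=P(\cdot,1)$), and the role of $P\leq 1$ in excluding the multiplicative degeneracy must be pinned down carefully. The conceptual obstacle is that in a general two-person game uniqueness of the best response to the opponent's equilibrium strategy does not exclude further equilibria; one must lean on the constant-sum structure — an optimal strategy for player II has to be safe against \emph{every} strategy of player I, not merely against bold — to pass from ``unique best response'' to ``unique equilibrium,'' with $P(1,y)>0$ and the strict monotonicity of $\f$ supplying the reachability that legitimises this passage.
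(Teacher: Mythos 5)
Your overall architecture coincides with the paper's proof: strictify the excessivity inequality \eqref{exc} against non-bold actions (your second paragraph is essentially the paper's computation, using the strict monotonicity of $\f$ together with $Q(x+1)>0$, which follows from $P(1,\cdot)>0$), strictify \eqref{star} against non-timid bets, and convert ``unique best response on each side'' into ``unique equilibrium'' via the constant-sum structure --- the paper handles this last passage by citing Pontiggia's Lemma A1, which is precisely the interchangeability argument you sketch in your first and last paragraphs. So three of your four paragraphs track the paper faithfully.

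The genuine gap is the step you yourself flag as ``the hard part'': strict inequality in \eqref{star} for $b\geq 2$ is never proved, and the route you propose for filling it cannot succeed. You hope to show that the multiplicative degeneracy $P(x,b)=\prod_{i=0}^{b-1}\f(x+i)$ is incompatible with a strictly increasing $\f$ solving \eqref{I1} while $P\leq 1$. It is not: take $\f(t)=1-e^{-\beta t}$ with $\beta>0$, which is strictly increasing, maps into $[0,1]$, has $\f(0)=0$, and satisfies \eqref{I1} \emph{with equality} (writing $q=e^{-\beta}$, both sides equal $q^{x}-q^{y}$), and define $P(a,b)=\prod_{i=0}^{b-1}\f(a+i)$ with the empty product equal to $1$. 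This $P$ satisfies \eqref{P0}, satisfies \eqref{mult} with equality, is bounded by $1$, and has $P(1,y)>0$ for $y\neq 0$ --- i.e., every hypothesis of Corollary \ref{c1} --- yet since $Q(x)=\prod_{j=x}^{M-1}\f(j)$, inequality \eqref{star} is an \emph{equality} for every $b$, so against bold every strategy of player II earns the same payoff and timid is not the unique best response. Hence no argument can extract strict \eqref{star} from the stated hypotheses, and your third paragraph is not merely incomplete but unfixable as planned: any repair must add an assumption excluding the equality case. For what it is worth, your diagnosis of the soft spot is accurate --- the paper's own one-line justification of strict \eqref{star}, ``using the fact that $P(x,b)\leq 1$,'' really requires $P(x,b)>\prod_{i=1}^{b}\f(x+b-i)$, which \eqref{mult} alone does not supply --- but identifying the obstruction is not the same as overcoming it, and at its decisive step the proposal does not constitute a proof.
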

\begin{proof}
We will follow an idea of the proof of \cite{P1}*{Corollary 4.1}. We will use notations of proofs of Theorems \ref{t1} and \ref{t2}.
First, we check that if player II plays timidly, then bold strategy is the unique optimal strategy for player I. More precisely, we will show that the only strategy for which \eqref{exc} holds with an equality is the bold strategy and we have a strict inequality in any other case. Clearly, if $a=x$ in \eqref{exc}, then one obtains an equality. Now, assume that $x \in S$ and $a\in\{1, \dots ,x-1\}$ are fixed. Since $\f$ is strictly increasing, then we have 
$$\f(a)< \f(x) \leq 1, \quad \f(x-a)< \f(x) \leq 1, \quad 0=\f(0)<\f(1).$$
Therefore $\prod\limits_{i=0}^{a+1}  \f(x-i)<1$. Observe also that since $P(1,\cdot)>0$ is positive for positive arguments, then also $Q(x+1)>0$. Now we can estimate the left-hand side of \eqref{exc} as follows:
\begin{align*}
\f(a)Q(x+1) &+ [1-\f(a)]Q(x-a) \\&= \f(a)Q(x+1) +  [1-\f(a)]\prod_{i=0}^{a+1}  \f(x-i)Q(x+1)\\&< ( \f(a) + 1 - \f(a)) Q(x+1) = Q(x+1).
\end{align*}
On the other hand,  the right-hand side of \eqref{exc} equals
$$Q(x)= \f(x)Q(x+1).$$
Eventually, we reach a strict inequality in \eqref{ext}.

Next,  we prove that if player I plays boldly, then the timid strategy is the unique optimal strategy for player II. More precisely, we will show that the only strategy for which \eqref{excT} holds with an equality is the timid strategy and we have a strict inequality in any other case.
We will work with \eqref{star}, which is an equivalent formulation of \eqref{excT}.
Clearly, if $b=1$ in \eqref{star}, then one obtains an equality. Now, assume that $x \in \{ 1, \dots,M-2\}$ and $b\in\{2, \dots ,M-x\}$ are fixed. Arguing as previously and using the fact that $P(x,b)\leq 1$ we estimate the right-hand side of \eqref{star} as follows:
$$
P(x,b)Q(x+b) = P(x,b) \frac{Q(x)}{\prod_{i=1}^{b}  \f(x+b-i)}>Q(x).
$$
The last argument needed in the proof is an application of a one more result proved by Pontiggia \cite{P1}*{Lemma A1}, which guarantees the uniqueness of the equilibrium. 
\end{proof}

\section{Inequalities}

In this section we will construct possibly large classes of functions solving inequalities \eqref{I1} and \eqref{mult}, which play crucial roles in our model. Next section contains examples and a discussion of some particular cases.
We begin with inequality \eqref{mult}. First, we will observe an easy but useful extension property. 

\begin{prop}\label{ext}
Assume that $M \geq 2$ is a fixed integer and $P\colon \{0, \ldots , M\} \times \{0, \ldots , M\} \to [0,1]$ is a mapping which satisfies \eqref{mult}  for every $x \in \{0, \ldots , M\}$ and $a \in \{0, \ldots , M-x\}$ and $b \in \{0, \ldots , M-a\}$. Then a map $\widetilde{P}\colon \Z \times \Z \to [0,1]$ given by 
$$ \widetilde{P} (x,y) = \left\{  \begin{array}{ll}
	0,  & \textrm{if } x< 0 \textrm{ or  } y< 0, \\
	1,  & \textrm{if } x+ y>M, \\ P(x,y), & \textrm{elsewhere} , 
	\end{array}            
\right.
$$
solves \eqref{mult} for all $a, b, x \in \Z$.
\end{prop}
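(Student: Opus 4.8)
The plan is to establish the single inequality
$\widetilde{P}(x,a)\,\widetilde{P}(x+a,b)\le \widetilde{P}(x,a+b)$
for all $x,a,b\in\Z$ by an exhaustive case analysis organized according to which of the three branches of the definition computes the \emph{right-hand side} $\widetilde{P}(x,a+b)$. The guiding observation is that $\widetilde{P}$ takes values in $[0,1]$ everywhere (it equals $0$, $1$, or a value of $P$, all of which lie in $[0,1]$), so the product on the left always lies in $[0,1]$; hence only the branch in which the right-hand side is genuinely a value of $P$ can be nontrivial, and that branch is exactly where I expect to invoke the hypothesis \eqref{mult}.

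First I would dispose of the two trivial branches. If $x\ge 0$, $a+b\ge 0$ and $x+(a+b)>M$, then $\widetilde{P}(x,a+b)=1$ and the inequality holds because the left-hand side lies in $[0,1]$. If instead $\widetilde{P}(x,a+b)=0$, which happens precisely when $x<0$ or $a+b<0$, I would show the left-hand side vanishes: when $x<0$ the first factor $\widetilde{P}(x,a)$ is zero; when $x\ge 0$ but $a+b<0$, at least one of $a,b$ is negative, and a negative second argument forces $\widetilde{P}(x,a)=0$ (if $a<0$) or $\widetilde{P}(x+a,b)=0$ (if $b<0$). In every such configuration the left-hand side is $0$, so the inequality holds.

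The substantive branch is $\widetilde{P}(x,a+b)=P(x,a+b)$, i.e. $x\ge 0$, $a+b\ge 0$ and $x+a+b\le M$. Here I would first eliminate the mixed-sign subcases: if $a<0$ then $\widetilde{P}(x,a)=0$, and if $b<0$ then $\widetilde{P}(x+a,b)=0$ (a negative second argument again), so the left-hand side is $0\le P(x,a+b)$. This leaves only $a,b\ge 0$ together with $x\ge 0$ and $x+a+b\le M$. In this regime $x+a\le M$ forces both $\widetilde{P}(x,a)=P(x,a)$ and $\widetilde{P}(x+a,b)=P(x+a,b)$ to be read off the original branch, and the desired inequality becomes exactly $P(x,a)P(x+a,b)\le P(x,a+b)$, which is \eqref{mult}.

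The one point requiring care, and the place I would slow down, is verifying that this last reduction lands precisely inside the index ranges under which \eqref{mult} was assumed: from $a,b\ge 0$, $x\ge 0$ and $x+a+b\le M$ one reads off $a\le M-x$ and $b\le M-x-a\le M-a$, so indeed $x\in\{0,\dots,M\}$, $a\in\{0,\dots,M-x\}$ and $b\in\{0,\dots,M-a\}$, and \eqref{mult} is applicable verbatim. The difficulty of the proposition is therefore entirely bookkeeping rather than genuine analysis; the only genuine subtlety is checking that every branch where the right-hand side is not a value of $P$ either gives a right-hand side equal to $1$ (handled by $\widetilde{P}\le 1$) or forces a vanishing factor on the left through a negative first or second argument.
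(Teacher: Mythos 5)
Your proof is correct and follows essentially the same route as the paper's: an exhaustive case analysis in which a negative first or second argument forces a vanishing factor on the left-hand side, a right-hand side equal to $1$ is handled by $\widetilde{P}\leq 1$, and the remaining region $x,a,b\geq 0$, $x+a+b\leq M$ reduces verbatim to \eqref{mult} for $P$. If anything, your organization by the branch computing $\widetilde{P}(x,a+b)$ is slightly more systematic than the paper's terse sign-based enumeration (which lumps the subcase $x+a\leq M<x+a+b$ into ``other cases''), and your explicit range check $b\leq M-x-a\leq M-a$ makes precise the bookkeeping the paper leaves implicit.
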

\begin{proof}
Fix $a, b, x \in \Z$ arbitrarily.
Observe that if at least one of numbers $x, a, x+a, b$ is nonpositive, then the left-hand side of \eqref{mult} is equal to zero and \eqref{mult} is satisfied, since $\widetilde{P}$ attains nonnegative values. If $a+b < 0$, then either $a < 0$ or $b <0$ and similarly the left-hand side of \eqref{mult} vanishes.
If $x+a>M$, then \eqref{mult} reduces to a trivial identity.
In other cases \eqref{mult} is satisfied by $\widetilde{P}$ since $P$ solves this inequality.
\end{proof}

Note that it can happen that $\widetilde{P}(x,y)\neq P(x,y)$ for some $(x,y) \in  \{0, \ldots , M\} \times \{0, \ldots , M\}$, but this is possible only in case $x+y>M$, which never happens in the game. 

In view of Proposition \ref{ext} we can study solutions of \eqref{mult} defined on $\Z\times \Z$, or more generally, on the square of an Abelian group.

\begin{prop}\label{Sin}
Assume that $(G, +)$ is an Abelian group and $P\colon G \times G \to \R $ is a mapping which satisfies inequality \eqref{mult} for every $x, a, b \in G$. Then a map $F\colon G \times G \to \R $ given by
\begin{equation}\label{PF}
F(x,a) = P(x,a-x), \quad a, x \in G,
\end{equation}
is a solution of the multiplicative Sincov's inequality:
\begin{equation}
\label{F}
F(x, a ) \cdot F(a, b) \leq F(x,b)  , \quad a , b, x \in G.
\end{equation}
\end{prop}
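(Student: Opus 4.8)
The plan is to prove \eqref{F} by a direct change of variables, exploiting the fact that the definition \eqref{PF} reinterprets the second argument of $P$ (an increment) as the difference of two ``absolute'' positions. The only tool needed is the hypothesis \eqref{mult}, instantiated at a suitable triple.

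First I would unfold the left- and right-hand sides of \eqref{F} by means of \eqref{PF}. Writing out each factor gives $F(x,a) = P(x, a-x)$, $F(a,b) = P(a, b-a)$ and $F(x,b) = P(x, b-x)$, so that the assertion \eqref{F} becomes
$$P(x, a-x)\,P(a, b-a) \leq P(x, b-x).$$
Next I would match this against \eqref{mult}, namely $P(x', a')\,P(x'+a', b') \leq P(x', a'+b')$, by choosing $x' = x$, $a' = a-x$ and $b' = b-a$. A short computation in the group shows $x' + a' = a$ and $a' + b' = b - x$, so with these choices \eqref{mult} is literally the displayed inequality. Since $x, a, b$ were arbitrary and the substituted values $x', a', b'$ range over all of $G$ as $x, a, b$ do, this establishes \eqref{F} for every $a, b, x \in G$.

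I do not anticipate any genuine obstacle. Because $G$ is a group, every element has an additive inverse, so the reparametrization is always legitimate and no positivity or range restriction can intervene; this is precisely why Proposition \ref{ext} is invoked beforehand, in order to pass from the finite state space $S$ to a group setting where the differences $a-x$ and $b-a$ are unconditionally available. The content of the proposition is thus conceptual rather than computational: it exhibits \eqref{mult} as a disguised form of the multiplicative Sincov inequality, and essentially all the work lies in spotting the correct change of variables, which the definition \eqref{PF} supplies directly.
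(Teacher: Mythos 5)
Your proof is correct and takes essentially the same route as the paper: a direct change of variables in \eqref{mult}, which the paper performs in two successive substitutions ($a \mapsto a-x$, then $b \mapsto b-a$) rather than your single combined substitution $x'=x$, $a'=a-x$, $b'=b-a$. The verification $x'+a'=a$, $a'+b'=b-x$ is exactly the computation underlying the paper's argument, so the two proofs are the same in substance.
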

\begin{proof}
Replace $a$ by $a-x$ in \eqref{mult} to get
$$P(x,a-x)P(a,b+a-a)\leq P(x,a+b-x), \quad a , b, x \in G$$
i.e.
$$F(x,a)F(a,b+a)\leq F(x,a+b), \quad a , b, x \in G.$$
Now, replace in this inequality $b$ by $b-a$ to obtain  \eqref{F}.
\end{proof}

In \cite{ja} we solved inequality \eqref{F} on certain topological spaces. We are going to utilize our results in a special case of $\Z \times \Z$ equipped with the discrete topology. Let us quote \cite{ja}*{Corollary 3}. Assume that $X$ is a nonempty set.
Denote $\Delta = \{(x,x) : x \in X\}$. Next, for a fixed mapping $F\colon X \times X \to (0, + \infty)$  introduce a class of functions:
$$\mathcal{F}(F) = \left\{  f\colon X \to (0, + \infty) : \forall_{ x, y \in X} \frac{f(x)}{f(y)} \geq F(x,y) \right\}.$$
The general solution of \eqref{F} is given in the following statement.

\begin{cor}[\cite{ja}, Corollary 3]\label{cF}
Assume that $X$ is a topological separable space  and  $F\colon X \times X \to (0, + \infty)$ is a solution of \eqref{F} which is continuous and equal to $1$ at every point of $\Delta$. Then
\begin{equation}\label{repF}
F(a,b) = \inf \left\{ \frac{f(a)}{f(b)} : f\in  \mathcal{F}(F) \right\}, \quad a, b \in X.
\end{equation}
Conversely, for an arbitrary family $\mathcal{F}$ of positive functions on $X$  every mapping $F\colon X \times X \to (0, + \infty)$ defined by \eqref{repF} solves \eqref{F}, it is equal to $1$ on $\Delta$ and $\mathcal{F}\subseteq \mathcal{F}(F)$.
\end{cor}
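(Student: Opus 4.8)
The plan is to prove the two assertions separately, and in each case the work reduces to elementary manipulations of the multiplicative Sincov inequality \eqref{F}; the only genuinely substantive point is the construction of an explicit extremal member of $\mathcal{F}(F)$ in the direct part. Throughout I abbreviate the normalization $F(x,x)=1$ by $F|_\Delta=1$.

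For the direct part I must show that, for fixed $a,b\in X$, the value $F(a,b)$ coincides with $\inf\{f(a)/f(b):f\in\mathcal{F}(F)\}$. One inequality is immediate from the very definition of $\mathcal{F}(F)$: every $f\in\mathcal{F}(F)$ satisfies $f(a)/f(b)\geq F(a,b)$, so the infimum is $\geq F(a,b)$. The reverse inequality is the heart of the matter, and I would establish it not by an approximation argument but by exhibiting a single competitor on which the infimum is attained. For fixed $c\in X$ define the slice
\begin{equation*}
f_c(x)=F(x,c),\qquad x\in X.
\end{equation*}
Since $F$ takes values in $(0,+\infty)$, each $f_c$ is a positive function, and I claim $f_c\in\mathcal{F}(F)$. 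Indeed, after clearing the positive denominator $F(y,c)$, the membership requirement $f_c(x)/f_c(y)\geq F(x,y)$ becomes $F(x,y)F(y,c)\leq F(x,c)$, which is precisely \eqref{F} with its middle argument $a$ replaced by $y$ and with $b$ replaced by $c$. Specializing to $c=b$ and invoking $F(b,b)=1$ gives $f_b(a)/f_b(b)=F(a,b)/1=F(a,b)$, so the competitor $f_b$ realizes the value $F(a,b)$ exactly; hence the infimum is $\leq F(a,b)$, and combining the two bounds yields \eqref{repF}. Continuity of $F$ additionally makes every slice $f_c$ continuous, so the representation may even be taken over the continuous members of $\mathcal{F}(F)$; in this sense the topological hypotheses serve only to keep the representing functions continuous and are not needed for the inequality itself.

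For the converse I start from an arbitrary family $\mathcal{F}$ of positive functions for which \eqref{repF} defines a map into $(0,+\infty)$, and verify the three claims in turn. The normalization is trivial, since $F(a,a)=\inf\{f(a)/f(a):f\in\mathcal{F}\}=\inf\{1\}=1$. For \eqref{F}, fix $x,a,b$; for each single $f\in\mathcal{F}$ the telescoping identity together with $F(x,a)\leq f(x)/f(a)$ and $F(a,b)\leq f(a)/f(b)$ gives
\begin{equation*}
F(x,a)F(a,b)\leq \frac{f(x)}{f(a)}\cdot\frac{f(a)}{f(b)}=\frac{f(x)}{f(b)},
\end{equation*}
and taking the infimum over $f$ on the right-hand side produces $F(x,a)F(a,b)\leq F(x,b)$. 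Finally $\mathcal{F}\subseteq\mathcal{F}(F)$, because any $g\in\mathcal{F}$ is itself one of the functions entering the infimum, whence $g(x)/g(y)\geq\inf_f f(x)/f(y)=F(x,y)$ for all $x,y$.

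The step I expect to be the crux — rather than a genuine obstacle — is the realization that no limiting process is required in the direct part: the slice $f_b=F(\cdot,b)$ already lies in $\mathcal{F}(F)$ and attains $F(a,b)$ on the nose. Once this is seen, the positivity of $F$ (guaranteeing $f_c>0$) and the normalization $F|_\Delta=1$ (guaranteeing $f_b(b)=1$) are exactly the ingredients that make the construction succeed, and separability enters only through the ambient framework inherited from \cite{ja}.
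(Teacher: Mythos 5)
Your proof is correct, and it is worth noting that the paper contains no proof to compare against: Corollary \ref{cF} is imported verbatim from \cite{ja}, so your argument supplies the missing details. Your slice construction $f_b=F(\cdot,b)$ — whose membership in $\mathcal{F}(F)$ is exactly inequality \eqref{F} and which attains $F(a,b)$ on the nose thanks to $F(b,b)=1$ — is the canonical argument for Sincov-type representations, and your observation that the infimum in \eqref{repF} is actually a minimum, with separability and continuity of $F$ playing no role in the representation itself (only positivity and $F=1$ on $\Delta$ are used, continuity merely ensuring the representing slices are continuous), is accurate and slightly sharpens the statement as quoted.
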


Note that if $F\colon \Z \times \Z \to (0, +\infty)$ is given by \eqref{PF}, then 
$$P(a,b) = F(a,a+b), \quad a, b \in \Z.$$
Moreover, from \eqref{P0} we get $F=1$ on $\Delta$ and $F(0,b)=0$ for $b \in \Z$.
Therefore, if $F $ is positive on $\N \times \N$, then from Corollary \ref{cF} we obtain a representation of $P$:
$$
P(a,b) = \inf \left\{ \frac{f(a)}{f(a+b)} : f\in  \mathcal{F}(F) \right\}, \quad a, b \in \N.
$$
Denote
$$\mathcal{P}(P) = \left\{  f\colon \N_0 \to [0, + \infty) : f(0)=0, \,f(a)>0 \textrm{ for } a > 0, \, 
\forall_{ a, b \in \N} \frac{f(a)}{f(a+b)} \geq P(a,b) \right\}.$$
Therefore we have established the following description of solutions of inequality \eqref{mult} for mappings defined on $\N_0$.

\begin{cor}\label{c2}
Assume that  $P\colon \N_0 \times \N_0 \to [0, + \infty)$ is a solution of \eqref{mult} which  satisfies \eqref{P0} and $P(a,b)>0$ whenever $a>0$. Then 
\begin{equation}\label{repP}
P(a,b) = \inf \left\{ \frac{f(a)}{f(a+b)} : f\in  \mathcal{P}(P) \right\}, \quad a, b \in \N_0, \, a+b>0.
\end{equation}
Conversely, for an arbitrary family $\mathcal{P}$ of functions on $\N_0$ which vanish only at $0$ every mapping $P\colon \N_0 \times \N_0 \to [0, + \infty)$ defined by \eqref{repP} and with $P(0,0)$ being arbitrary solves \eqref{mult}, satisfies \eqref{P0} and $P(a,b)>0$ whenever $a>0$.
\end{cor}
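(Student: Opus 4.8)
The plan is to read this statement as the translation of Corollary~\ref{cF} to $X=\N$ equipped with the discrete topology (which is separable), through the correspondence of Proposition~\ref{Sin}: writing $F(x,a)=P(x,a-x)$ turns \eqref{mult} into Sincov's inequality \eqref{F}, and one recovers $P(a,b)=F(a,a+b)$. Under this dictionary the class $\mathcal{F}(F)$ (restricted to positive arguments) corresponds to $\mathcal{P}(P)$, the border condition $P(x,0)=1$ gives $F=1$ on the diagonal, and $P(0,b)=0$ corresponds to the forced value $f(0)=0$. The point I would flag at the outset, and which I expect to be the main obstacle, is that Corollary~\ref{cF} requires $F$ to be strictly positive on all of $X\times X$, whereas our $F$ vanishes whenever the first argument is $0$ or when $a<x$ (the ``lower triangle''). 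Hence Corollary~\ref{cF} cannot be quoted verbatim, and the boundary $a=0$ must be dealt with by hand.

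For this reason I would prove the forward representation directly rather than invoking the black box. The inequality $P(a,b)\le\inf\{f(a)/f(a+b):f\in\mathcal{P}(P)\}$ is immediate: for $a,b\in\N$ it is the defining property of $\mathcal{P}(P)$, while the border cases follow from \eqref{P0} together with $f(0)=0$ (if $b=0$ every ratio equals $1$, and if $a=0$ every ratio equals $0$). The substantive step is the reverse inequality, for which it suffices, given a target $(a_0,b_0)$ with $a_0,b_0\ge 1$, to exhibit one $f^\ast\in\mathcal{P}(P)$ with $f^\ast(a_0)/f^\ast(a_0+b_0)=P(a_0,b_0)$. I would set $N=a_0+b_0$ and define $f^\ast(n)=P(n,N-n)$ for $0\le n\le N$; then $f^\ast(0)=0$ and $f^\ast(N)=1$ by \eqref{P0}, the intermediate values are positive because $P$ is positive on a positive first argument, and $f^\ast(a_0)/f^\ast(a_0+b_0)=P(a_0,b_0)/P(N,0)=P(a_0,b_0)$ as required.

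It remains to check that $f^\ast\in\mathcal{P}(P)$, i.e. that $f^\ast(a)\ge P(a,b)\,f^\ast(a+b)$ for all $a,b\in\N$. For $a+b\le N$ this is precisely the instance $P(a,b)\,P(a+b,N-a-b)\le P(a,N-a)$ of \eqref{mult} (take $x=a$ and split the second argument as $b+(N-a-b)$). To cover the pairs with $a+b>N$, which are not controlled by the explicit formula, I would extend $f^\ast$ beyond $N$ by the monotone recursion $f^\ast(n)=\min_{1\le m\le n-1} f^\ast(m)/P(m,n-m)$; each quotient is well defined and positive since $m\ge1$ forces $P(m,n-m)>0$, so $f^\ast(n)>0$, and by construction every constraint with $a+b=n$ (in particular those with $a\le N<a+b$, where $m=a$ occurs in the minimum) is satisfied. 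This makes $f^\ast$ a legitimate member of $\mathcal{P}(P)$ and completes the forward direction; in passing it shows $\mathcal{P}(P)\neq\emptyset$, so the infimum is meaningful.

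For the converse I would argue directly from \eqref{repP}. Factoring $f(x)/f(x+a+b)=[f(x)/f(x+a)]\cdot[f(x+a)/f(x+a+b)]\ge P(x,a)\,P(x+a,b)$ for every $f\in\mathcal{P}$ and taking the infimum on the left yields \eqref{mult}; the cases in which some argument is $0$ reduce to the identities $P(\cdot,0)=1$ and $P(0,\cdot)=0$, which follow from $f(n)/f(n)=1$ and $f(0)=0$ and give \eqref{P0}. Finally, for $a>0$ each ratio $f(a)/f(a+b)$ is strictly positive because $f$ vanishes only at $0$; the only delicate point is that the infimum of these positive numbers remains positive, which is exactly the standing requirement that \eqref{repP} define a map with the asserted codomain and positivity --- the same positivity hypothesis appearing in the converse part of Corollary~\ref{cF}. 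I expect the forward witness construction, and specifically the extension past $N$ that circumvents the non-positivity obstacle noted above, to be the only part requiring genuine care; the converse is routine manipulation of infima.
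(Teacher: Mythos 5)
Your proposal is correct, but it takes a genuinely different route from the paper. The paper does not prove Corollary~\ref{c2} by hand: it obtains it as a specialization of the abstract representation theorem for Sincov's inequality (Corollary~\ref{cF}, quoted from \cite{ja}) via the substitution $F(x,a)=P(x,a-x)$ of Proposition~\ref{Sin}, adding only the remarks that \eqref{P0} forces $F=1$ on $\Delta$ and $F(0,\cdot)=0$, and the proviso ``if $F$ is positive on $\N\times\N$''. The obstacle you flag at the outset is real: the $F$ induced by $P$ vanishes when the first argument is $0$ and (after the extension of Proposition~\ref{ext}) on the lower triangle $a<x$, so the hypothesis $F\colon X\times X\to(0,+\infty)$ of Corollary~\ref{cF} is not literally satisfied, and the paper glosses over this. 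Your direct proof repairs exactly that: your witness $f^\ast(n)=P(n,N-n)$ for $n\le N$ is precisely the canonical Sincov witness $F(\cdot,N)$ underlying Corollary~\ref{cF}, and the recursive extension $f^\ast(n)=\min_{1\le m\le n-1}f^\ast(m)/P(m,n-m)$ for $n>N$ is the new ingredient that keeps $f^\ast$ positive off $0$ and inside $\mathcal{P}(P)$ where the lower-triangle values of $F$ would have been $0$. I checked the details: for $a+b\le N$ your constraint is the instance $P(a,b)P(a+b,N-a-b)\le P(a,N-a)$ of \eqref{mult}, and every constraint with $a+b=n>N$ occurs as the term $m=a$ of the minimum, so $f^\ast\in\mathcal{P}(P)$ and the infimum in \eqref{repP} is attained; the converse is the same routine infimum manipulation that is implicit in the paper. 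Your closing caveat is also well taken: for an infinite family the infimum in \eqref{repP} can vanish for $a>0$ (e.g.\ $f_k(0)=0$ and $f_k(n)=k^n$ for $n\ge 1$, $k\in\N$, gives $\inf_k k^{-b}=0$), so the positivity assertion in the converse must indeed be read conditionally, as inherited from the codomain convention $(0,+\infty)$ of Corollary~\ref{cF} --- a point on which the paper is silent. In sum, the paper's route buys brevity through a black-box citation, while yours buys a self-contained and fully rigorous argument that in fact closes a genuine gap in the paper's derivation.
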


Corollary \ref{c2} provides a method of construction of solutions of \eqref{mult} starting with an arbitrary family of functions.
In order to force the function $P$ given by formula \eqref{repP} to be a win probability function it is enough to assume that $1 \in \mathcal{P}$. Some examples are provided in the next section.

\medskip

Now, we will turn to inequality \eqref{I1}. We will provide a description of solutions in case of differentiable functions defined on an interval. We will need the following fact.

\begin{prop}[\cite{ja2}*{Proposition 1}]\label{pr}
Let $I$ be a non-void open interval, let $f\colon I \to \R$ be a differentiable function and $m \in \R$ an arbitrary constant. Then $$f'(x) \leq m  f(x), \quad x \in I,$$ if and only if there exists a nonincreasing and differentiable map $d\colon I \to \R$ such that $$f(x)=d(x)\exp(mx), \quad x \in I.$$
\end{prop}

For technical reasons we will assume that functions are defined on a half-open interval containing zero as its left endpoint and we work with a one-sided derivative at zero.

\begin{thm}\label{ti}
Assume that $\f\colon [0, M) \to \R$ is a differentiable function with a one-sided derivative at $0$. If $\f$ solves \eqref{I1} for all $x, y \in [0, M]$ such that $y\leq x$, then there exists a nondecreasing and differentiable map $i\colon [0, M) \to \R$ with a one-sided derivative at $0$ which satisfies 
\begin{equation}\label{i}
\f(x)=i(x)\exp(-cx)+1, \quad x \in [0, M),
\end{equation}
where $c=\f'(0)$.
Conversely, if a nondecreasing map $i\colon [0, M) \to \R$ satisfies
\begin{equation}\label{k}
0 \leq i(t)i(y) + i(t+y), \quad t, y \in [0, M), \, t+y \leq M,
\end{equation}
$c \in \R$ is a constant,
then a map $\f\colon [0, M) \to \R$ defined by \eqref{i} solves \eqref{I1}.
\end{thm}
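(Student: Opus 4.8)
The plan is to treat the substitution $\f(x)=i(x)\exp(-cx)+1$, equivalently $i(x)=(\f(x)-1)\exp(cx)$, as the hinge of the whole statement. First I would substitute this expression into \eqref{I1} and simplify: writing $\f(y)-1=i(y)\exp(-cy)$ and collecting the exponential factors, the terms $i(y)\exp(-cy)$ occurring on both sides cancel, and the common factor $\exp(-cx)$ (using $x=(x-y)+y$, so that $\exp(-c(x-y))\exp(-cy)=\exp(-cx)$) divides out, leaving exactly $0\le i(x-y)i(y)+i(x)$. Thus, for \emph{any} constant $c$, the function $\f$ given by \eqref{i} solves \eqref{I1} on the range $\{y\le x\}$ if and only if $i$ solves \eqref{k}. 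This single computation already yields the converse direction in full, and in fact the monotonicity of $i$ is not even needed there: given any $i$ satisfying \eqref{k} and any $c$, the function $\f$ of \eqref{i} satisfies \eqref{I1}.

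For the forward direction it then remains to produce the map $i$ and check that it is nondecreasing; its differentiability, including the one-sided derivative at $0$, is inherited from $\f$ since $\exp(cx)$ is smooth. I would set $c=\f'(0)$ and $i(x)=(\f(x)-1)\exp(cx)$, so that \eqref{i} holds by construction and monotonicity is the only real content. A direct computation gives $i'(x)=\exp(cx)\bigl[\f'(x)-c(1-\f(x))\bigr]$, so $i$ is nondecreasing precisely when the differential inequality $\f'(x)\ge c(1-\f(x))$ holds for all $x\in[0,M)$. Equivalently, applying Proposition \ref{pr} to the function $1-\f$ with $m=-c$ (on the half-open interval, per the convention fixed above), the inequality $(1-\f)'(x)\le -c(1-\f(x))$ is exactly the condition that $(1-\f(x))\exp(cx)=-i(x)$ be nonincreasing, i.e. that $i$ be nondecreasing. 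So the task reduces to establishing this one differential inequality.

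To obtain $\f'(x)\ge c(1-\f(x))$ I would return to \eqref{I1} in the form $\f(u+y)-\f(y)\ge \f(u)\,(1-\f(y))$, which is just \eqref{I1} rewritten with $u=x-y\ge 0$. Dividing by $u>0$ and letting $u\to 0^{+}$, the left side tends to $\f'(y)$, while the right side equals $\tfrac{\f(u)}{u}(1-\f(y))$. Here the normalization $\f(0)=0$ — which holds in our setting because $\f(x)=P(x,1)$ and \eqref{P0} force $\f(0)=P(0,1)=0$ — is what makes $\tfrac{\f(u)}{u}\to\f'(0)=c$, so the right side tends to $c(1-\f(y))$ and the desired inequality follows. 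I expect this limiting step to be the main obstacle: without $\f(0)=0$ the quotient $\f(u)/u$ diverges and the passage to the limit collapses, so the role of the border condition must be pinned down (in the degenerate case $\f\equiv 1$ one has $c=0$ and $i\equiv 0$, and the claim is immediate). Once $\f'(x)\ge c(1-\f(x))$ is in hand, Proposition \ref{pr} closes the forward direction and the proof is complete.
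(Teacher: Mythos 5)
Your proof is correct and follows essentially the same route as the paper: the converse is obtained by the same substitution of \eqref{i} into \eqref{I1}, with the $i(y)e^{-cy}$ terms cancelling and the factor $e^{-cx}$ dividing out to leave exactly \eqref{k} (the paper's ``straightforward calculation'' likewise never uses monotonicity of $i$), and the forward direction derives the differential inequality $\f'(x)\geq c\,(1-\f(x))$ from \eqref{I1} by a one-sided limit and closes with Proposition \ref{pr} applied to $\pm(\f-1)$, exactly as in the paper. You are in fact slightly more careful than the paper in one spot: the paper's limit $\f(t)/t\to\f'(0)$ also silently uses $\f(0)=0$ (which in context comes from \eqref{P0} via \eqref{fi}), a point you flag explicitly together with the degenerate case $\f\equiv 1$.
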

\begin{proof}
We begin with the first part. Fix $t \in (0,M)$ and apply \eqref{I1} for $y=x-t$ to get 
$$\f(x-t) -\f(x) \leq \f(t)[\f(x-t)-1].$$
From this we have
  $$\frac{\f(x-t) -\f(x)}{-t} \geq -\frac{\f(t)}{t}[\f(x-t)-1].$$
Tending with $t \to 0$ and using the differentiability of $\f$ we arrive at
$$
\f'(x) \geq -\f'(0)[\f(x)-1], \quad x\in (0,M).
$$
Define $f\colon [0, M) \to [0,1]$ as $f(x)=\f(x)-1$ for $x\in [0,M)$ and put $c= \f'(0)$. Then we have
\begin{equation}\label{diff}
f'(x) \geq -cf(x), \quad x\in (0,M).
\end{equation}
To finish this part of the proof it is enough to apply Proposition \ref{pr} with $f$ replaced by $-f$ (we have $m=-c$ and $i=-d$). 

A straightforward calculation shows that for a given map $i\colon [0, M) \to \R$ function $\f\colon [0, M) \to \R$ given by \eqref{i} with arbitrary $c\in \R$ is a solution to \eqref{I1} if and only if $$0 \leq i(x-y)i(y) + i(x), \quad x, y \in [0, M), \, y \leq x,$$ which is true by \eqref{k} after substitution $t=x-y$.
\end{proof}

\section{Examples and final remarks}

First, we will show how a known example of win probability can be obtained as a special case of our model with the aid of Corollary \ref{c2}.  

\begin{ex}
Assume that we are given some number $p \geq 1$. Define $\mathcal{P}$ as a family consisting of exactly one function $f\colon \N_0 \to [0, + \infty)$ given by $f(t) = t^p$ for $t \in \N_0$. Then $P(a,b) = a^p/(a+b)^p$. This win probability was introduced by M.-R. Chen and S.-R. Hsiau  \cite{Ch1}*{Example 2.2 and Example 2.4}. Note that in particular \eqref{I1} and \eqref{mult} are satisfied.
\end{ex}

An curious example is provided by a family $\mathcal{P}$ consisting of a linear map and an exponential. 

\begin{ex}\label{e2}
Let $m \in (0,\infty)$ be a fixed number.
Define $f_i\colon \N_0 \to [0, \infty)$ for $i\in \{1, 2\}$ as
$$f_1(k) = k, \quad f_2(k) = \exp \(mk\), \quad k \in \N$$ and $f_i(0)=0$. Therefore, for $a, b \in \N_0$ we have 
$$P(a,b) = \min \left\{ \frac{a}{a+b}, \exp\(-mb\) \right\}.$$ 
Next, observe that 
 $$\frac{a}{a+b} = \frac{1}{1+\frac{b}{a}}\geq \frac{1}{\exp (\frac{b}{a})} = \exp \(-\frac{b}{a}\)$$
for $a, b \in \N$ such that  $a>1/m$. Thus $P(a,b) = \exp\(-mb\)$, in particular in this case the probability does not depend upon $a$. If player I plays boldly and his initial fortune is large enough to meet the condition $x_0>1/m$  and player 2 plays timidly, then the probability of winning a stage of the game for the first player is equal to $\exp\(-m\)$, which can be arbitrarily close to zero if $m$ is large enough. Moreover, it is clear that it is not best for player 1 to play boldly if his fortune exceeds the number $\lceil 1/m \rceil$, since he can bet precisely $\lceil 1/m \rceil$ to obtain the same possible gain as playing boldly, but with a smaller risk. This however does not contradict Theorem \ref{t1}, since in this case $\f$ given by \eqref{fi} is a constant map equal to $\exp(-m)$ and this is not a solution of \eqref{I1}. From this example we also see that without additional assumptions upon a win probability function a Nash equilibrium postulated Corollary \ref{c} needs not to be unique. 
\end{ex}

With the aid of Theorem \ref{ti} one can construct a family of functions which solve \eqref{I1}. 
To obtain a win probability function one needs to find a nondecreasing function $i$ which satisfies \eqref{k} and such that  $\f$ given by \eqref{i}  takes values in the interval $[0,1]$ (for the latter reason, one cannot take simply an arbitrary nonnegative nondecreasing function as $i$). Denote $k=-i$. Clearly, $i$ solves \eqref{k} if and only if $k$ satisfies:
\begin{equation}\label{sm}
 k(t+y) \leq k(t)k(y), \quad t, y \in [0, M), \, t+y \leq M.
\end{equation}
Observe that if $k$ takes values in the interval $[0,1]$ and the constant $c$ is nonnegative, then the map $\f$ given by \eqref{i} with $i=-k$ takes values in $[0,1]$, as well. We can summarize this short argument in the following statement.

\begin{cor}
Assume that $c \geq 0$ is a constant and $k\colon [0,M)\to [0,1]$ is a nonincreasing map which satisfies \eqref{sm}. Then function 
$\f\colon [0,M) \to [0,1]$ given by
\begin{equation}\label{kp}
\f(x)=-k(x) e^{-cx} +1, \quad x\in [0,M)
\end{equation}
yields a nondecreasing solution of \eqref{I1}. 
\end{cor}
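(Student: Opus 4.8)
The plan is to read the corollary as a direct specialization of the converse part of Theorem~\ref{ti}, supplemented by two elementary verifications—range and monotonicity—that use precisely the extra hypotheses $c\geq 0$ and $0\leq k\leq 1$. Accordingly I would split the argument into three short steps: translating the hypotheses on $k$ into the hypotheses on the map $i$ required by Theorem~\ref{ti}, invoking that theorem to obtain \eqref{I1}, and then checking that $\f$ in \eqref{kp} indeed maps $[0,M)$ into $[0,1]$ and is nondecreasing.

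First I would set $i=-k$, so that \eqref{kp} becomes exactly \eqref{i}. Since $k$ is nonincreasing, $i$ is nondecreasing. Moreover $i(t)i(y)+i(t+y)=k(t)k(y)-k(t+y)$, so the submultiplicativity \eqref{sm} of $k$ is literally inequality \eqref{k} for $i$. Hence $i$ is a nondecreasing map satisfying \eqref{k}, and the converse direction of Theorem~\ref{ti} applies with this $i$ and the given constant $c$. It yields at once that $\f(x)=i(x)\exp(-cx)+1=-k(x)e^{-cx}+1$ solves \eqref{I1} for all $x,y\in[0,M)$ with $y\leq x$.

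It remains to pin down the range and monotonicity, and this is where the hypotheses $c\geq 0$ and $0\leq k\leq 1$ enter. For the range, I would note that for $x\in[0,M)$ one has $e^{-cx}\in(0,1]$ because $c\geq 0$ and $x\geq 0$; combined with $k(x)\in[0,1]$ this gives $k(x)e^{-cx}\in[0,1]$, whence $\f(x)=1-k(x)e^{-cx}\in[0,1]$. For monotonicity, I would observe that $x\mapsto k(x)$ and $x\mapsto e^{-cx}$ are both nonnegative and nonincreasing (the latter again because $c\geq 0$), so their product $x\mapsto k(x)e^{-cx}$ is nonincreasing; consequently the map $x\mapsto 1-k(x)e^{-cx}$, that is $\f$, is nondecreasing.

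There is no genuine obstacle here: all the substance resides in Theorem~\ref{ti}, which has already been proved, and the corollary merely isolates a convenient sufficient condition in terms of a submultiplicative $k$. The only point requiring a little care is not to overlook that both extra conclusions—values in $[0,1]$ and monotonicity—fail without the sign restriction $c\geq 0$ and the bound $k\leq 1$, so these two hypotheses must be invoked explicitly rather than treated as consequences of $k$ being a nonincreasing solution of \eqref{sm} alone.
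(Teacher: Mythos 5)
Your proof is correct and takes essentially the same approach as the paper: it sets $i=-k$ so that \eqref{sm} is exactly \eqref{k}, invokes the converse part of Theorem \ref{ti} to get \eqref{I1}, and uses $c\geq 0$ together with $k\in[0,1]$ to secure the range; your explicit check that $\f$ is nondecreasing (as $x\mapsto k(x)e^{-cx}$ is a product of nonnegative nonincreasing functions) merely spells out a point the paper leaves implicit.
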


For a discussion of inequality \eqref{sm} the reader is referred to the monograph E. Hille and R.S. Phillips \cite{HP} (inequality (7.4.1) therein).
Below we will examine the simplest case with $k=1$.

\begin{ex}\label{el}
Assume that $M\in \N$ and the win probability function $P\colon [0,M] \times [0,M] \to [0,1]$ is given by
$$P(a,b) = -e^{b-a}+1, \quad a, b \in [0,M], \, b \leq a$$
and $P(a,b) = 0$ if $b>a$.
One can check that the game is neither sub- nor superfair. Function $\f$ given by \eqref{fi} solves \eqref{I1} and is represented by formula \eqref{kp} with $k(x) = 1$.
Thus, all conditions of Theorem \ref{t1} are met. Note that
$P(x,x+1)=0$ for all $x \in S$; in other words player II wins every time he bets more than his opponent. This does not contradict Theorem \ref{t1}, since we assume that player II plays timidly. Therefore, in case when the initial fortunes of the two players satisfy $x_0<y_0$, the bold-timid strategy is not a unique Nash equilibrium, since player II playing boldly wins for sure, regardless the strategy of his opponent. 
\end{ex}

\bigskip

\begin{ack}
The author would like to express his most sincere gratitude to the three anonymous reviewers for a number of valuable comments regarding the previous versions of the manuscript which have led to the essential improvement of the whole paper. 
\end{ack}

\begin{bibdiv}
\begin{biblist}

\bib{Ch1}{article}{
   author={Chen, May-Ru},
   author={Hsiau, Shoou-Ren},
   title={Two-person red-and-black games with bet-dependent win probability functions},
   journal={J. Appl. Probab.},
   volume={43},
   date={2006},
   number={4},
   pages={905--915},
   issn={0021-9002},
}

\bib{Ch2}{article}{
   author={Chen, May-Ru},
   author={Hsiau, Shoou-Ren},
   title={Two new models for the two-person red-and-black game},
   journal={J. Appl. Probab.},
   volume={47},
   date={2010},
   number={1},
   pages={97--108},
   issn={0021-9002},
}

\bib{DS}{book}{
   author={Dubins, Lester E.},
   author={Savage, Leonard J.},
   title={How to gamble if you must. Inequalities for stochastic processes},
   publisher={McGraw-Hill Book Co., New York-Toronto-London-Sydney},
   date={1965},
   pages={xiv+249},
}

\bib{ja2}{article}{
   author={Fechner, W\l odzimierz},
   title={On some functional-differential inequalities related to the exponential mapping},
   journal={Tokyo J. Math.},
   volume={34},
   date={2011},
   number={2},
   pages={345--352},
   issn={0387-3870},
}

\bib{ja}{article}{
   author={Fechner, W\l odzimierz},
   title={Sincov's inequalities on topological spaces},
   journal={Publ. Math. Debrecen},
   volume={},
   date={2020},
   number={},
   pages={(in press)},
   issn={},
   review={},
   doi={},
}		

\bib{HP}{book}{
   author={Hille, Einar},
   author={Phillips, Ralph S.},
   title={Functional analysis and semi-groups},
   series={American Mathematical Society Colloquium Publications, vol. 31},
   note={rev. ed},
   publisher={American Mathematical Society, Providence, R. I.},
   date={1957},
   pages={xii+808},
}

\bib{MS}{book}{
   author={Maitra, Ashok P.},
   author={Sudderth, William D.},
   title={Discrete gambling and stochastic games},
   series={Applications of Mathematics (New York)},
   volume={32},
   publisher={Springer-Verlag, New York},
   date={1996},
   pages={xii+244},
   isbn={0-387-94628-4},
}

\bib{P1}{article}{
   author={Pontiggia, Laura},
   title={Two-person red-and-black with bet-dependent win probabilities},
   journal={Adv. in Appl. Probab.},
   volume={37},
   date={2005},
   number={1},
   pages={75--89},
   issn={0001-8678},
}

\bib{P2}{article}{
   author={Pontiggia, Laura},
   title={Nonconstant sum red-and-black games with bet-dependent win probability function},
   journal={J. Appl. Probab.},
   volume={44},
   date={2007},
   number={2},
   pages={547--553},
   issn={0021-9002},
}

\bib{S}{article}{
   author={Secchi, Piercesare},
   title={Two-person red-and-black stochastic games},
   journal={J. Appl. Probab.},
   volume={34},
   date={1997},
   number={1},
   pages={107--126},
   issn={0021-9002},
}

\bib{Y}{article}{
   author={Yao, Yi-Ching},
   author={Chen, May-Ru},
   title={Strong optimality of bold play for discounted Dubins-Savage gambling problems with time-dependent parameters},
   journal={J. Appl. Probab.},
   volume={45},
   date={2008},
   number={2},
   pages={403--416},
   issn={0021-9002},
}

\end{biblist}
\end{bibdiv}

\end{document}